\numberwithin{equation}{section}
\theoremstyle{plain}
\theoremstyle{plain}
\newtheorem{theorem}{Theorem}[section]
\newtheorem{lemma}{Lemma}[section]
\newtheorem{proposition}{Proposition}[section]
\theoremstyle{remark}
\newtheorem{remark}{Remark}[section]
\newcommand{\rd}{\,\mathrm{d}} 
\newcommand{\ri}{\mathrm{i}} 
\DeclareMathOperator\var{var}
\newcommand{\rP}{\mathbb{P}} 
\newcommand{\rE}{\mathbb{E}}
\newcommand{\cvgdist}{\xrightarrow{\text{\upshape\tiny D}}} 
\renewcommand{\Re}{{\rm Re}}
\newcommand{\re}{\mathrm{e}}
\newcommand{\cis}[2]{#1 \re^{\ri #2}}
\newcommand{\braket}[2]{{\langle{#1|#2}\rangle}}
\begin{document}\title[An exact asymptotic]{An asymptotic variance of the self-intersections of random walks}

\author[G. Deligiannidis]{George Deligiannidis}
\address{Department of Mathematics\\University of Leicester\\LE1 7RH, UK}
\email{gd84@le.ac.uk}

\author[S. Utev]{Sergey Utev}
\address{School of Mathematical Sciences\\University of Nottingham\\NG7 2RD, UK}
\email{sergey.utev@nottingham.ac.uk}

\begin{abstract}
We present a Darboux-Wiener type lemma and apply it to obtain an exact asymptotic for the variance of the self-intersection of one and two-dimensional random walks. As a corollary, we obtain a central limit theorem for random walk in random scenery conjectured by Kesten and Spitzer~\cite{Kest79}.
\end{abstract}
\maketitle

\section{Introduction}
Consider a random walk \ $S_0 = 0,$ \ $S_n =X_1+\ldots+X_n$
with iid increments $\{X_i$, $i\in \mathbb{N}\}$ of $\mathbb{Z}^d$-valued random variables, for $d=1,2$.
Let $V_n$ be the number of self-intersections of the random walk up to time $n$,
\begin{equation}\label{vn}
V_n = \sum\limits_{i,j=0}^n \mathbf{1}_{S_i = S_j}.
\end{equation}
The asymptotic moments of $V_n$ have been treated extensively for their close relation with the Edwards model and self-avoiding walks (see Lawler~\cite{Law91}), and their importance in the limit theory of {\sl random walk in random scenery} (see \cite{Kest79}).

The conjecture that the variance of the self-intesections is of the order of $O(n^2)$ has been around for more than thirty years and its origins can be traced back to early work by Varadhan~\cite{Var69app} and Symanzik~\cite{Symanzik69}. Particular cases have been solved in the literature, such as the two-dimensional simple random walk which was proved in \cite[Prop.~6.4.1]{Law91}. For the case of two-dimensional recurrent random walk Bolthausen~\cite{Bolt89} developed a methodology based on the asymptotic analysis of the generating function $\sum_{i=1}^n \lambda^n \var(V_n)$, $\lambda \in [0,1)$ and the Tauberian theorem. His method allows to treat symmetrized random walk, while for the general case only the weaker bound $O(n^2\log n)$ can be obtained (further explanations given at the beginning of Section~3). A similar approach appeared more recently in \cite{Cerny07}, where once again the $n^2$ bound was only proven conclusively for special cases.
In particular the conjecture remained open until now.

In this paper we shall present a different approach, based on a Darboux-Wiener type result,  Lemma~\ref{tauberian}, which serves as a powerful alternative to the Tauberian theorem. While we too consider the asymptotics of the generating function, we allow the parameter $\lambda$ to be {\sl complex}, and using Cauchy's formula we are able to completely avoid the monotonicity restriction imposed by the Tauberian theorem. In this way we prove the $n^2$ bound without additional symmetry conditions, and we actually obtain the exact asymptotics rather than just a bound.
We also prove the conjecture in one dimension for random walks attracted to the symmetric Cauchy law. As a corollary we prove a functional central limit theorem for random walk in random scenery conjectured by Kesten and Spitzer~\cite{Kest79}.

Finally, we also present a direct Fourier approach as an alternative method of proof for the variance conjecture.

The rest of the paper is structured as follows. In Section~2 we present our main results. Then the proofs are given in Sections~3 and 4.

\section{Main results}
Let $f(t)$, for $t\in J=[-\pi,\pi)^d$, be the characteristic function of the $X_i$. We assume the random walk is {\sl strongly aperiodic} in the sense that
there is no proper subgroup $L$ of $\mathbb{Z}^d$ such that for some $x \in \mathbb{Z}^d$ with
$\rP(X_i = x) >0$ and $\rP(X_i-x \in L) = 1$. This assumption then implies that for $t\in J$,
\ $f(t)=1$ if and only if  $t=0$.

Theorems~\ref{main} and \ref{thmbounds} concern the asymptotic variance of $V_n$.
\begin{theorem}\label{main}
(i) Let $d=1$ and $f(t) = 1 - \gamma |t| + R(t),$ where $R(t) = o(|t|)$, as $t\to 0$. Then,
$$\var(V_n) \thicksim 4\left( \frac{1}{12\gamma^2} + \frac{1}{\pi^{2}\gamma^{2}} \right)n^2.$$
(ii) Let $d=2$ and assume that the $X_i$ have a non-singular covariance matrix $\Sigma$, such that \
$ f(t) = 1 - \frac{1}{2}\braket{\Sigma t}{t} + R(t),$ where $R(t) = o(|t|^2)$, as $|t| \to 0$.
Then,
$$\var(V_n) \thicksim 4(2\pi)^{-2}|\Sigma|^{-1}(1+\kappa)n^2, \mbox{ where }$$
$$\kappa \equiv \int_0^\infty\!\!\!\int_0^\infty
\frac{\rd r \rd s}{(1+r)(1+s)\sqrt{(1+r+s)^2 - 4rs}} - \frac{\pi^2}{6}.$$
\end{theorem}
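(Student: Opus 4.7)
I would study the generating function
$$G(\lambda) := \sum_{n\ge 0}\lambda^n \var(V_n),\qquad \lambda\in\mathbb{C},\ |\lambda|<1,$$
determine its singular behavior as $\lambda\to 1$, and then invoke Lemma~\ref{tauberian} to read off the coefficient asymptotic. The first step is to expand
$$\var(V_n) = \sum_{i_1,j_1,i_2,j_2=0}^{n} \mathrm{cov}\bigl(\mathbf{1}_{S_{i_1}=S_{j_1}},\,\mathbf{1}_{S_{i_2}=S_{j_2}}\bigr),$$
and to replace each indicator by $(2\pi)^{-d}\int_J \re^{\ri t\cdot(S_i-S_j)}\rd t$ via Fourier inversion. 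Taking expectations, each covariance becomes an integral over $J\times J$ whose integrand, depending on the relative order of the four time indices, is either a product of three factors of $f$ of the form $f(t_1)^a f(t_1\pm t_2)^b f(t_2)^c$ (joint piece) or the fully factorised $f(t_1)^{|i_1-j_1|} f(t_2)^{|i_2-j_2|}$ (subtracted product). When the intervals $[i_1\wedge j_1,i_1\vee j_1]$ and $[i_2\wedge j_2,i_2\vee j_2]$ are disjoint the two pieces match and the covariance vanishes, so only the overlapping/nested orderings contribute.

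Next, for each surviving ordering I parametrise by the minimum index, the three gaps, and the tail past the maximum index. Substituting into $G(\lambda)$ and interchanging sums, two of the resulting geometric series yield factors $(1-\lambda)^{-1}$ (from the minimum and the tail) while each gap sum produces a factor $(1-\lambda h)^{-1}$ with $h$ one of $f(t_1), f(t_2), f(t_1\pm t_2), f(t_1)f(t_2)$. This gives
$$G(\lambda) \;=\; \frac{1}{(1-\lambda)^2}\int_{J\times J}\Phi(t_1,t_2;\lambda)\,\rd t_1\,\rd t_2,$$
where $\Phi$ is a fixed finite signed sum of such triple products. Strong aperiodicity guarantees $|f|<1$ on $J\setminus\{0\}$, so $\Phi(\cdot;\lambda)$ is integrable away from the origin uniformly in $\lambda\in\{|\lambda|\le 1\}$ and the only singularity of $G$ on the closed unit disk is at $\lambda=1$.

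To pin down its order I rescale the Fourier variables: set $t_i=(1-\lambda)s_i$ in dimension one and $t_i=\sqrt{1-\lambda}\,s_i$ in dimension two. Using the hypothesized local expansions, each denominator factors as
$$1-\lambda f(t)=(1-\lambda)\bigl(1+\gamma|s|+o(1)\bigr)\quad\text{or}\quad (1-\lambda)\bigl(1+\tfrac12\braket{\Sigma s}{s}+o(1)\bigr),$$
and, after combining with the Jacobian $\rd t_1\rd t_2=(1-\lambda)^{2}\rd s_1\rd s_2$ (in each $d$) and with the $(1-\lambda)^{-2}$ prefactor, dominated convergence delivers
$$G(\lambda)\;\sim\;\frac{2C}{(1-\lambda)^3},\qquad C=4\Bigl(\tfrac{1}{12\gamma^2}+\tfrac{1}{\pi^{2}\gamma^{2}}\Bigr)\ \text{or}\ 4(2\pi)^{-2}|\Sigma|^{-1}(1+\kappa).$$
Lemma~\ref{tauberian}, applied to this cubic singularity, then converts this into $\var(V_n)\sim Cn^2$.

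\textbf{Main obstacle.} The delicate step is identifying the constant $C$. It requires (i) careful enumeration of the overlapping orderings and sign bookkeeping in the cancellation between the joint and the product Fourier integrands, so that after the subtraction the limit integral over $\mathbb{R}^d\times\mathbb{R}^d$ actually converges; (ii) a dominated-convergence justification of the rescaling that leverages strong aperiodicity to produce uniform lower bounds $1-|f(t)|\ge c|t|^{\beta}$ on $J$ (with $\beta=1$ or $2$); and (iii) explicit evaluation of the resulting limit integrals. The two terms $1/(12\gamma^2)$ and $1/(\pi^2\gamma^2)$ in the one-dimensional formula will correspond to contributions from the nested and the proper-overlap orderings respectively, while the non-factorising double integral $\kappa$ in two dimensions arises from the three-way overlap pattern once a linear change of Fourier variables diagonalises $\Sigma$ (producing the $|\Sigma|^{-1}$ Jacobian).
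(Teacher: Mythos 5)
Your strategy matches the paper's in its essentials: decompose the variance covariance sum by the relative ordering of the four indices, discard the two disjoint orderings (the paper's $A^1,B^1$), pass to the complex generating function, analyse its singularity at $\lambda=1$, and invert via Lemma~\ref{tauberian}. However, there are two places where your sketch glosses over steps that would fail as stated and that the paper handles differently.

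First, your rescaling $t_i=(1-\lambda)s_i$ (or $t_i=\sqrt{1-\lambda}\,s_i$) followed by dominated convergence does not make literal sense once $\lambda$ is complex: the original variables $t_i\in J$ are real, so for non-real $\lambda$ the substitution would map the contour off the real axis, and you cannot simply appeal to dominated convergence to extract a real limit integral. The paper confronts exactly this problem and resolves it by a two-stage argument: it establishes error bounds (valid uniformly for $\Re(\lambda)>\alpha$) that isolate the would-be limit integral over $\mathbb{R}^{2d}$, and then evaluates that integral in closed form only for \emph{real} $\lambda\in(1/2,1)$, invoking analytic continuation to extend the resulting formula $(1-\lambda)^{-1}(\lambda\gamma)^{-2}$ (etc.) to the whole disk $|\lambda|<1$. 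The error estimates themselves require the explicit lower bounds \eqref{cbound1}--\eqref{cbound4} on $|1-\lambda f(t)|$ and $|1-\lambda f(t)f(s)|$ for complex $\lambda$ near the unit circle, which is far more delicate than a one-line domination claim; this is the technical heart of the proof and your proposal does not sketch a replacement.

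Second, Lemma~\ref{tauberian} gives only \emph{upper bounds} on $|a_n|$, not asymptotics, so you cannot directly convert $G(\lambda)\sim 2C(1-\lambda)^{-3}$ into $\var(V_n)\sim Cn^2$. The paper instead writes $\rho_3(\lambda)=(\pi\gamma)^{-2}(1-\lambda)^{-3}+\mathcal{E}$ with $|\mathcal{E}|\le C(\epsilon)|1-\lambda|^{-3}+D(\epsilon)|1-\lambda|^{-2}\log_+|1-\lambda|^{-1}+\cdots$, applies the lemma to the \emph{difference} $\rho_3-g$ where $g$ has explicitly computable coefficients, and then exploits the fact that the $(1-\lambda)^{-3}$ error carries a constant $C(\epsilon)\to 0$ as $\epsilon\to 0$ while the lower-order errors have $\epsilon$-dependent but bounded order $n\log n$. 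Only after taking $\epsilon\downarrow 0$ does the asymptotic follow. This subtract-and-bound structure is essential and missing from your plan. (A minor slip: in dimension one the $\frac{1}{12\gamma^2}$ term comes from the proper-overlap orderings $A^2,B^3$ and the $\frac{1}{\pi^2\gamma^2}$ from the nested orderings $A^3,B^2$, the reverse of what you stated.)
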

A direct Fourier approach, similar to the local limit theorem technique in \cite{Law91}, is enough to prove the variance conjecture without resorting to Tauberian type results.
\begin{theorem}\label{thmbounds}
Let $X_j$ be independent $\mathbb{Z}^d$ variables with characteristic function $f_j(t)$, and $S_n$, $V_n$ be defined as above.
Assume in addition that either
(i) $d=1$, $|f_j(t)|\leq \re^{-\alpha |t|}$ and $|1-f_j(t)|\leq \alpha |t|$ for some positive constant $\alpha$, or
(ii) $d=2$, $|f_j(t)|\leq \re^{-\alpha |t|^2}$ and $|1-f_j(t)|\leq \alpha |t|^2$ for some positive constant $\alpha$.
Then
$$\var(V_n) \leq C(\alpha)n^2.$$
\end{theorem}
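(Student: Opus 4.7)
First, I would write
\[
\var(V_n) = \sum_{i,j,k,l=0}^n \bigl[\rP(S_i=S_j,\,S_k=S_l) - \rP(S_i=S_j)\rP(S_k=S_l)\bigr],
\]
reduce by symmetry to the orderings $i\le k$, $i\le j$, $k\le l$, and partition into the disjoint ($j\le k$), overlapping ($k\le j\le l$), and nested ($k\le l\le j$) cases. The disjoint case contributes zero by independence of increments. In the two remaining cases let $a,b,c$ denote the three consecutive interval lengths, $\Sigma_A,\Sigma_B,\Sigma_C$ the corresponding partial sums, and $F_A,F_B,F_C$ their characteristic functions. Fourier inversion expresses each summand as a double integral over $(t,s)\in J^2$ whose integrand features the bracket
\[
F_B(t+s)-F_B(t)F_B(s)\;=\;\operatorname{Cov}(\re^{\ri t\Sigma_B},\re^{\ri s\Sigma_B}).
\]

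For the nested case ($i\le k\le l\le j$) the $t$-integrand contains the factor $F_A(t)F_C(t)$, and the $s$-integral is carried out using periodicity of $F_B$ on $J$, yielding the compact identity
\[
\mathrm{diff}^{(C)}_{ijkl} \;=\; p_B(0)\bigl[p_{A\cup C}(0) - p_{A\cup B\cup C}(0)\bigr],\qquad p_D(0):=\rP(\Sigma_D=0).
\]
Combining $p_B(0)\le C/(1+b)$ with the trivial bound $|p_{A\cup C}(0)-p_{A\cup B\cup C}(0)|\le C/(1+a+c)$ and with the Lipschitz bound
\[
|p_{A\cup C}(0)-p_{A\cup B\cup C}(0)|\;\le\;(2\pi)^{-d}\!\int|F_AF_C|\,|1-F_B|\rd t\;\le\;\frac{C\alpha b}{(1+a+c)^2},
\]
(coming from $|1-F_B(t)|\le \alpha b|t|^\delta$ and $|F_A(t)F_C(t)|\le \re^{-\alpha(a+c)|t|^\delta}$), I take the minimum to obtain $|\mathrm{diff}^{(C)}_{ijkl}|\le C/[(1+a+c)(1+a+b+c)]$. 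Summing this over $a,b,c\ge 0$ with $a+b+c\le n$ gives $O(n)$, and multiplying by the at most $n$ choices of the translation $i$ produces the nested-case bound $O(n^2)$.

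For the overlapping case ($i\le k\le j\le l$) I would exploit the covariance identity
\[
F_B(t+s)-F_B(t)F_B(s)=\rE\bigl[(\re^{\ri t\Sigma_B}-F_B(t))(\re^{\ri s\Sigma_B}-F_B(s))\bigr],
\]
which, pushed through the double integral, yields
\[
\mathrm{diff}^{(B)}_{ijkl} = \rE\bigl[(p_A(-\Sigma_B)-p_{A\cup B}(0))(p_C(-\Sigma_B)-p_{B\cup C}(0))\bigr],
\]
a genuine covariance under the law of $\Sigma_B$, since $\rE p_A(-\Sigma_B)=p_{A\cup B}(0)$. Cauchy--Schwarz together with $\rE p_A(-\Sigma_B)^2\le \|p_A\|_\infty p_{A\cup B}(0)\le C/[(1+a)(1+a+b)]$ and the analogous estimate for $p_C$ then give
\[
|\mathrm{diff}^{(B)}_{ijkl}|\;\le\;\frac{C}{\sqrt{(1+a)(1+c)(1+a+b)(1+b+c)}}.
\]
Summation over $(a,b,c)$ with $a+b+c\le n$ is again $O(n)$, so the overlapping contribution is $O(n^2)$.

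The main obstacle is avoiding stray logarithmic factors: crude pointwise bounds such as $|F_AF_C|\,(|F_B(t+s)|+|F_B(t)F_B(s)|)$ produce $n^2\log n$ or worse after integration and summation. The essential savings come from (i) the factorisation of the nested-case double integral into $p_B(0)$ times a return-probability difference, so that the factor $b$ supplied by the Lipschitz bound is cancelled by $p_B(0)\sim 1/b$ and the answer gains the extra $1/(1+a+b+c)$ needed to cancel the logarithm; and (ii) the covariance interpretation of the overlapping-case integral, so that Cauchy--Schwarz combined with the $L^\infty$ bounds on $p_A,p_C$ and the return probabilities $p_{A\cup B}(0),p_{B\cup C}(0)$ produces the square-root bound above rather than a product of two $1/(1+\cdot)$ factors.
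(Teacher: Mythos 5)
Your proof is correct, and it reaches the $O(n^2)$ bound through the same broad strategy as the paper (decompose by the ordering of the four indices, exploit independence to kill the ``disjoint'' configurations, then bound the nested and overlapping sums), but you treat the overlapping sum quite differently. For the nested case you factor the summand as $p_B(0)\bigl[p_{A\cup C}(0)-p_{A\cup B\cup C}(0)\bigr]$, exactly as the paper does in~\eqref{a3}, and your minimum of the trivial bound $1/(1+a+c)$ and the Lipschitz bound $\alpha b/(1+a+c)^2$ is the same estimate the paper obtains by the pointwise bound $|1-f^{m_3}(y)|\le C\bigl((m_3|y|^2)\wedge 1\bigr)$ under the integral; both produce the extra factor $1/(1+a+b+c)$ that cancels the logarithm, and the sum over $(a,b,c)$ is then $O(n)$ in either formulation. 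Where you genuinely depart from the paper is the overlapping case: the paper follows Bolthausen in applying the Fourier representation to the two terms of the difference \emph{separately}, i.e.\ to $\sum\rP(S_{m_2+m_3}=0)\rP(S_{m_3+m_4}=0)$ and to $\sum\rP(S_{m_2}=-S_{m_3},\,S_{m_4}=-S_{m_3})$, and then checks by an explicit (and somewhat delicate) computation that each of these sums is already $O(n^2)$ on its own. You instead keep the difference intact, identify it as the covariance of $p_A(-\Sigma_B)$ and $p_C(-\Sigma_B)$ under the law of $\Sigma_B$, and apply Cauchy--Schwarz together with the $L^\infty$ and return-probability bounds to obtain $C\bigl[(1+a)(1+c)(1+a+b)(1+b+c)\bigr]^{-1/2}$; since $(1+a+b)(1+b+c)\ge(1+b)(1+a+b+c)$, the sum of this over $a+b+c\le n$ is indeed $O(n)$ as you assert. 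Your route is arguably cleaner and more robust: it makes the smallness of the overlapping difference structurally transparent (it is small because it is a covariance of two functionals of the same shared block), rather than relying on the fact that the two separate sums each happen to avoid picking up a logarithm. The paper's route avoids the covariance algebra at the cost of a less conceptual pair of explicit estimates, and is somewhat closer in spirit to Bolthausen's original argument.
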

Suppose further, that $\xi(\alpha)$, indexed by $\alpha \in \mathbb{Z}^d$, are iid, real random variables, independent of the $X_i$ with $\rE \xi(\alpha) = 0$, $\rE\xi(\alpha)^2 = \sigma^2 >0$.
Then by {\slshape random walk in random scenery}
we shall mean the process
$$Z_0=0,\quad Z_n = \sum_{k=1}^n \xi(S_k), \quad n\geq 1.$$

Various limit theorems have appeared in the literature concerning weak limits of the process
$Y_n(t) = Z_{[nt]}/c_n$, $t\in [0,1]$ where $c_n$ is some normalizing sequence.
For random walks satisfying the assumptions of Theorem~\ref{main}(ii), \cite{Bolt89} showed that
$\{Y_n(\cdot)\}_n$ converges weakly in $D[0,\infty)$ to Brownian motion under the normalizing sequence $c_n = \sqrt{n\log n}$.
For $d=1$, with $X_i$, $\xi(\alpha)$ in the domain of attraction of distinct stable laws with parameters $\alpha \in (1, 2]$ and $\beta\in (0,2]$ respectively, \cite{Kest79} obtained a non-Gaussian limiting process.
The case $\alpha < 1$, transient random walk, is much simpler and was treated earlier in \cite{Spitzer76}.
The remaining case, $\alpha=1$, was conjectured by Kesten and Spitzer~\cite{Kest79} to converge to Brownian motion under the normalization $c_n = \sqrt{n \log n}$.
The proof of this is given in the following theorem.
\begin{theorem}\label{clt}
Suppose the random walk in Theorem~\ref{main}(i) and an independent i.i.d. sequence
$\{\xi(\alpha)\}_{\alpha \in \mathbb{Z}}$, with $\rE\xi(\alpha) =0$ and $\rE \xi(\alpha)^2 = \sigma^2>0$,
are now defined on the product probability space $\Sigma \times \Xi$.
Let
$$Y_n(t) = Y_n(t,\omega) = \sqrt{\pi \gamma} \sum_{i=0}^{[nt]}\xi(S_i(\omega))/\sigma \sqrt{2n\log n},\quad t\in [0,1].$$
Then the laws of $(Y_n(\cdot, \omega)_{n\geq 0}$ converge weakly in $D[0,1]$ to the Wiener measure for a.e. random walk path $\omega\in \Sigma$.
\end{theorem}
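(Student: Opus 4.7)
\emph{Proof plan.} We adopt a quenched perspective. Conditioning on the walk $\omega\in\Sigma$ and introducing the local time $N_m(x,\omega)=\#\{0\le i\le m : S_i(\omega)=x\}$, we have
\[
Y_n(t,\omega) = c_n\sum_{x\in\mathbb{Z}} N_{[nt]}(x,\omega)\,\xi(x), \qquad c_n = \frac{\sqrt{\pi\gamma}}{\sigma\sqrt{2n\log n}}.
\]
For fixed $\omega$ the process $Y_n(\cdot,\omega)$ is, in the scenery randomness, a weighted sum of independent mean-zero random variables whose conditional variance at time $t$ equals $c_n^2\sigma^2V_{[nt]}=(\pi\gamma)V_{[nt]}/(2n\log n)$. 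The plan is to show that for a.e.\ $\omega\in\Sigma$: (a) the conditional covariances satisfy $\rE[Y_n(s)Y_n(t)\mid\omega]\to s\wedge t$; (b) a conditional Lindeberg condition holds, producing multidimensional Gaussian limits; and (c) increment moment bounds yield tightness in $D[0,1]$.

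The deterministic input shared by all three steps is the almost-sure asymptotic $V_n/(n\log n)\to 2/(\pi\gamma)$. The local limit theorem implied by $f(t)=1-\gamma|t|+o(|t|)$ gives $\rP(S_k=0)\sim 1/(\pi\gamma k)$, whence $\rE V_n\sim 2n\log n/(\pi\gamma)$, while Theorem~\ref{main}(i) provides $\var(V_n)/(\rE V_n)^2=O((\log n)^{-2})$. Along the subsequence $n_k=\lfloor\exp(k^\theta)\rfloor$ with $1/2<\theta<1$, the series $\sum_k(\log n_k)^{-2}$ converges, so Chebyshev together with Borel--Cantelli yields the a.s.\ limit along $(n_k)$; the ratio $n_{k+1}/n_k\to 1$ and the monotonicity of $V_n$ in $n$ interpolate the convergence to the full sequence, and monotonicity of $t\mapsto V_{[nt]}$ against its continuous limit upgrades this to uniform convergence on $t\in[0,1]$. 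Step (a) then follows from the identity
\[
2\sum_x N_a(x)N_b(x) = V_a + V_b - V'_{(a,b]}\qquad(a\le b),
\]
where $V'_{(a,b]}$ denotes the self-intersection count of the shifted path $(S_{a+i}-S_a)_{0\le i\le b-a}$, distributionally equal to $V_{b-a}$; the a.s.\ asymptotic applies to each of the three summands.

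For (b), the summands $W_x=c_n N_{[nt]}(x)\xi(x)$ have sup-weight $c_nM_n(\omega)$ with $M_n=\max_x N_n(x)$; the crude inequality $M_n^2\le V_n$ gives $M_n=O(\sqrt{n\log n})$, and a sharper tail bound on $N_n(x)$ (via strong aperiodicity and local-limit inputs, exploiting recurrence of the walk) refines this to the $o(\sqrt{n\log n})$ required to close out Lindeberg via dominated convergence in $\rE[\xi^2\mathbf{1}_{|\xi|>\varepsilon/(c_nM_n)}]$. For (c), the conditional increment variance $\rE[(Y_n(t)-Y_n(s))^2\mid\omega]=c_n^2\sigma^2 V'_{([ns],[nt]]}$ is dominated by $C(\omega)(t-s)$, uniformly in $s<t$ and $n$, via the a.s.\ envelope $V_m(\omega)\le C'(\omega)m\log m$; combined with Doob's maximal inequality applied to the martingale obtained by ordering the $\xi(x)$'s and with the uniform smallness of the jumps $c_n\xi(S_i)$, this produces the modulus estimate needed for tightness in $D[0,1]$. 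I expect the main obstacle to be precisely the passage from $\var(V_n)=O(n^2)$ to the a.s.\ asymptotic: the Chebyshev ratio decays only logarithmically, forcing a sparse subsequence in Borel--Cantelli, and some care is required to combine $n_{k+1}/n_k\to 1$ with monotonicity of $V_n$ so that uniformity in $t$ is preserved. Once this ingredient is secured, the remaining quenched CLT and tightness steps are essentially standard.
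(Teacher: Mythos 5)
Your plan follows essentially the same quenched route as the paper: condition on the walk, write $Y_n(t)=c_n\sum_x N_{[nt]}(x)\xi(x)$, prove an almost-sure law of large numbers for $V_n$ from the variance bound of Theorem~\ref{main}(i) via Chebyshev, Borel--Cantelli along a sparse geometric-type subsequence $n_k=\lfloor\exp(k^\theta)\rfloor$ (so that $\sum_k(\log n_k)^{-2}<\infty$), and monotonicity of $V_n$; then check a conditional Lindeberg condition and invoke Cram\'er--Wold for the finite-dimensional distributions; then establish tightness from a maximal inequality. Your steps (a) and (b) coincide with the paper's Lemma~\ref{lem:aux} and the subsequent conditional CLT: the covariance identity $2\sum_x N_a(x)N_b(x)=V_a+V_b-V'_{(a,b]}$ is correct and is equivalent to the fourth item of Lemma~\ref{lem:aux} (the $o(n\log n)$ bound on the mixed intersection count), and you correctly isolate the ingredient $\sup_x N_n(x)=o(\sqrt{n\log n})$ needed for Lindeberg (the paper uses the stronger $o(n^\epsilon)$ for all $\epsilon$; in both cases the local-time bound is asserted rather than proved).

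The genuine gap is in (c). Under only $\rE\xi^2<\infty$, a Kolmogorov--Billingsley tightness criterion requires $\rE\bigl[|Y_n(t)-Y_n(s)|^a\bigr]\le C|t-s|^{1+b}$ with $b>0$; your second-moment inequality $\rE\bigl[(Y_n(t)-Y_n(s))^2\mid\omega\bigr]\le C(\omega)(t-s)$ gives only an exponent of $1$ and cannot close. Doob's inequality applied to the ``martingale in $x$'' controls maxima of partial sums over a spatial enumeration of sites, not $\sup_t|Y_n(t)-Y_n(s)|$, because the weight $N_{[nt]}(\cdot)-N_{[ns]}(\cdot)$ varies with $t$ and the process is not a martingale in $t$ conditional on $\omega$. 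And ``uniform smallness of the jumps $c_n\xi(S_i)$'' is false when $\xi$ is merely square-integrable. The paper's tightness argument is designed precisely to circumvent both issues: it truncates the scenery $\xi_x$ into a bounded middle part plus left and right tails, controls the tails via the Newman--Wright maximal inequality for associated sequences \cite{Newman81} (here one exploits that the truncated pieces are monotone functions of the $\xi_x$), and controls the bounded part via the M\'oricz--Serfling--Stout quasi-superadditive maximal inequality \cite{Mor82}. To repair your step (c) you should adopt a truncation-plus-association argument of this form (or adapt Bolthausen's tightness proof directly, as the paper also indicates) rather than the Doob/small-jump route.
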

\begin{remark}
Almost sure convergence was partly motivated by \cite{Plant08}.
A similar version of the result in \cite{Bolt89} also holds.
\end{remark}

\section{Proofs of Theorems~\ref{main},\ref{thmbounds}\label{proofs}}
The variance of $V_n$ is given by
\begin{equation*}
 \var(V_n) = 4 \sum\limits_{H} \Big[
 \rP(S_{i_1} = S_{j_1},S_{i_2}=S_{j_2}) - \rP(S_{i_1}=S_{j_1})\rP(S_{i_2}=S_{j_2}) \Big],
\end{equation*}
where $H$ is the the set of 4-tuples
$$H = \{(i_1,j_1,i_2,j_2):0\leq i_1,j_1,i_2,j_2\leq n, i_1<j_1,i_2 <j_2\}, $$
which we partition into six sets
\begin{equation*}
 \begin{split}
  A^1 &= \{(i_1,j_1,i_2,j_2):0\leq i_1 < j_1 \leq i_2 < j_2\leq n \},\\
  A^2 &= \{(i_1,j_1,i_2,j_2):0\leq i_1 \leq i_2 < j_1 < j_2 \leq n \},\\
    A^3 &= \{(i_1,j_1,i_2,j_2):0\leq i_1 \leq i_2 < j_2 \leq j_1 \leq n \},\\
  B^1 &= \{(i_1,j_1,i_2,j_2):0\leq i_2 < j_2 \leq i_1 < j_1\leq n \},\\
  B^2 &= \{(i_1,j_1,i_2,j_2):0\leq i_2 < i_1 < j_1 \leq j_2 \leq n \},\\
  B^3 &= \{(i_1,j_1,i_2,j_2):0\leq i_2 < i_1 < j_2 < j_1 \leq n \}.
 \end{split}
\end{equation*}
The sums over the sets $A^1$, $B^1$ are zero by independence, and thus
\begin{equation}
\var(V_n) =4( a_2(n) + a_3(n) + b_2(n) + b_3(n)),
\end{equation}
where $a_2$, $a_3$, $b_2$, and $b_3$ are the sums over $A^2$, $A^3$, $B^2$ and $B^3$ respectively.

Other approachs such as local limit theorems(\cite{Law91}, Chap.6), or strong invariance principle(\cite{Bass06}) require finite moments of higher order.
On the other hand to apply Karamata's Tauberian theorem as in \cite{Bolt89, Cerny07}, the underlying sequence must be monotone. Bolthausen~\cite{Bolt89} circumvented this restriction of by considering the terms of the difference separately.
%
If we do consider them separately we get the exact asymptotic
$$c(n)= \sum\limits_{\mathbf{m}\in M_n}\rP(S_{m_2+m_3+m_4}=0)\rP(S_{m_3}=0)\thicksim Cn^2 \log(n),$$
since $\sum_{n} c(n) \lambda^n \sim C(1-\lambda)^{-3}\log(1/(1-\lambda))$, which is precisely the correct calculation for the terms $a_2$, $a_4$ in \cite{Bolt89}.
On the other hand \cite{Cerny07} considered the terms of the difference together. Letting $M_n$ be the set of 5-tuples $(m_1, \dots, m_5)$ such that $m_1,m_2,m_4,m_5\geq 0$, $m_3>0$ and $m_1 + \cdots +m_5 = n$,
and using  the formula $\rP(S_k = 0) = (2\pi)^{-d} \int_J f^k (x) \rd x$,
we can write
\begin{align*}
a_3(n)
&= (2\pi)^{-2}\sum_{\mathbf{m}\in M_n}
\rP(S_{m_3 = 0}) \int_{J}f^{m_2 + m_4}(k)\big[1 - f(k)^{m_3}\big] \rd k
\end{align*}
The monotonicity restriction of the Tauberian theorem in this case rougly requires that $f(t) \geq 0$.

\subsection{A Darboux-Wiener type lemma}
Rather than appealing to the Tau\-be\-rian theorem, the proof of Theorem~\ref{main} is based on Lemma~\ref{tauberian}.
Similar results have been used in the past in the context of singularity analysis, and in fact Lemma~\ref{tauberian} generalizes Theorem~4 in Flajolet and Odlyzko~\cite{Flaj90}, which mainly treats algebraic singularities. This approach which finds its origins in early work by Wiener \cite{Wiener32} and Darboux (see \cite{Knuth89} for Darboux's lemma), and has been well known in the combinatorial community for some time, is the key ingredient needed to revive the techniques in Bolthausen~\cite{Bolt89} and to correctly estimate the asymptotic variance of $V_n$.

\begin{lemma}\label{tauberian}
Assume that $g(z) = \sum_{n=0}^\infty a_n z^n$ is analytic for $|z|<1$.
Suppose there exists $\alpha \in (0,1)$, a constant $K>0$, such that
$|g(z)| \leq K$, for $\Re(z) \leq \alpha$, a sequence of
non-negative constants $A_m>0$, $\gamma_m >1$, and non-negative monotone increasing functions $l_m$
such that
$$|g(z)| \leq \sum_m A_m|1-z|^{-\gamma_m} l_m (|1-z|^{-1}),\quad \text{for $\Re(z) > \alpha$}.$$
Then
$$|a_n| \leq 4 K + \sum_m A_m C(\gamma_m) n^{\gamma_m -1}l_m(n),$$
where $C(\gamma)=4\pi^{-1/2}\Gamma(\frac{\gamma -1}{2})/\Gamma(\frac{\gamma}{2})$.
\end{lemma}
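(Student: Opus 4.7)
The approach is to extract $a_n$ from $g$ via Cauchy's integral formula on a circle $|z|=r$ with $r$ slightly less than $1$, then split the contour according to which of the two hypotheses applies. The gain over the classical Tauberian route, as already stressed in the text, is that the complex contour integration requires no monotonicity of the coefficient sequence $\{a_n\}$; the monotonicity of the functions $l_m$ enters only at the very end, on the integrand rather than on the sequence.

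Concretely I would take $r = 1-1/n$ and start from
$$|a_n| \leq \frac{r^{-n}}{2\pi}\int_0^{2\pi}|g(re^{i\theta})|\,d\theta,$$
noting that $(1-1/n)^{-n}\leq 4$ for $n\geq 2$, so the prefactor $r^{-n}$ is harmless. I would then split $[0,2\pi)$ into $\Theta_1 = \{\theta:\Re(re^{i\theta})\leq\alpha\}$ and $\Theta_2 = [0,2\pi)\setminus \Theta_1$. On $\Theta_1$ the first hypothesis gives $|g|\leq K$, and since the arc has length at most $2\pi$ this part contributes at most $K r^{-n}\leq 4K$, accounting for the first term on the right-hand side.

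On $\Theta_2$ the singularity bound applies. The key monotonicity step is to observe that on our contour $|1-z|\geq 1-r = 1/n$, so $l_m(|1-z|^{-1})\leq l_m(n)$, whereby each slowly varying factor may be pulled out of the integral. What is left is
$$\frac{r^{-n}}{2\pi}\sum_m A_m l_m(n)\int_0^{2\pi}|1-re^{i\theta}|^{-\gamma_m}\,d\theta.$$
A routine computation on the inner integral---using $|1-re^{i\theta}|^2 = (1-r)^2 + 2r(1-\cos\theta)\geq (1-r)^2 + 4r\theta^2/\pi^2$ on $|\theta|\leq\pi$ and rescaling---reduces it to $(1-r)^{1-\gamma_m}$ times $\int_0^\infty(1+u^2)^{-\gamma_m/2}\,du = \tfrac{\sqrt\pi}{2}\Gamma((\gamma_m-1)/2)/\Gamma(\gamma_m/2)$, the Beta-function identity (which requires $\gamma_m>1$, as assumed). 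Setting $1-r=1/n$ and collecting constants produces precisely $C(\gamma_m)A_m n^{\gamma_m-1}l_m(n)$ up to a harmless numerical factor bounded above by $1$.

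\textbf{Main obstacle.} The structural part of the argument is not deep; the delicate point is tracking the constants so that the final multiplicative prefactor in front of $A_m n^{\gamma_m-1}l_m(n)$ is genuinely at most $C(\gamma_m)$. The product of $r^{-n}\leq 4$, the $1/(2\pi)$ from Cauchy's formula, the $1/\sqrt{r}$ emerging from the substitution, and the Beta-function evaluation collapses to roughly $(e\pi/16)\,C(\gamma_m) < C(\gamma_m)$, so there is comfortable room; but the specific choice $r = 1-1/n$ is what makes the $\Theta_1$ estimate work out to exactly $4K$, and any other rate $r = 1-c/n$ would shift that explicit constant.
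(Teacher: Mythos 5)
Your proof is correct and follows essentially the same route as the paper's: Cauchy's formula on the circle of radius $1-1/n$, the split of the contour according to $\Re(z)\lessgtr\alpha$, the observation that $|1-z|\geq 1/n$ lets monotonicity of $l_m$ pull the slowly varying factor out, and the Beta-function evaluation $\int_0^\infty(1+u^2)^{-\gamma/2}\,du=\tfrac{\sqrt\pi}{2}\Gamma(\tfrac{\gamma-1}{2})/\Gamma(\tfrac\gamma2)$ for the remaining integral. The only cosmetic difference is your use of $1-\cos\theta\geq 2\theta^2/\pi^2$ on $|\theta|\leq\pi$ where the paper uses $\cos t\leq 1-t^2/4$ on $|t|\leq\pi/2$; both deliver the same power of $n$ with comparable constants.
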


\begin{proof}
Let $\varGamma$ be a circle around the origin of radius $R=1-1/n$, for $n\geq 2$ and
$R=1/2$ for $n=1$.
We split $\varGamma$ in two arcs, $\varGamma_1 \equiv \{z\in \varGamma: \Re(z) \leq \alpha\}$, and $\varGamma_2 \equiv \{z\in \varGamma: \Re(z) > \alpha\}$.
By applying the Cauchy formula
\begin{align*}
|a_n|
&=\left|\frac{1}{2\pi i} \int_{\varGamma} g(z)z^{-n-1}\rd z\right|\\
&\leq \frac{1}{2\pi} \left| \int_{\varGamma_1} g(z) z^{-n-1}\rd z\right|
			+ \frac{1}{2\pi}\left|\int_{\varGamma_2} g(z)z^{-n-1}\rd z\right|.
\end{align*}
Since $|g(z)|\leq K$ when $\Re(z)\leq \alpha$, and $R^{-n}\leq 4$
for $n\geq 1$,
$$\left| \int_{\varGamma_1} g(z) z^{-n-1}\rd z\right|
\leq \int_{0}^{2\pi} K R^{-n} \rd t\leq 8\pi K\ . $$
On the other hand for the integral on $\varGamma_2$,
\begin{equation*}
 \left|\int_{\varGamma_2} g(z)z^{-n-1}\rd z\right|
\leq  \sum_m R^{-n} A_m\int_{-\pi/2}^{\pi/2} |1-\cis{R}{t}|^{-\gamma_m} l_m(|1-\cis{R}{t}|^{-1}) \rd t,
\end{equation*}
Fix $m$ in the sum. Let the summand be denoted by $I$  and to simplify
notation let us ignore the dependence on $m$. It remains to prove that
$$
I\leq 2\pi C(\gamma) A n^{\gamma -1}l(n).
$$
Then since \ $|1-\cis{R}{t}| = [(1-R)^2 + 2R(1-\cos (t))]^{1/2}$\
and $l$ is monotone increasing, observe that for all $t$ and $n$
\begin{equation*}
l(|1-\cis{R}{t}|^{-1})=
l\left( [n^{-2} + 2R(1-\cos(t))]^{-1/2}\right)
 \leq l(n)
\end{equation*}
which together with $R^{-n}\leq 4$ leads to the bound
$$ I \leq  4 l(n) A \int_{-\pi/2}^{\pi/2} |1-\cis{R}{t}|^{-\gamma} \rd t.$$
From $\cos(t) \leq 1-t^2/4$\ for $t\in [-\pi/2,\pi/2]$, it follows that
\begin{align*}
 \int_{-\pi/2}^{\pi/2} |1-\cis{R}{t}|^{-\gamma} \rd t
 &\leq \int_{-\pi/2}^{\pi/2} \left[(1-R)^2 + \frac{Rt^2}{2}\right]^{-\gamma/2} \rd t\\
 &\leq 4 n^{\gamma-1}\int_0^{\infty} \left[1 + t^2\right]^{-\gamma/2} \rd t
 	= 2\sqrt{\pi}\frac{\Gamma(\frac{\gamma -1}{2})}{\Gamma(\frac{\gamma}{2})} n^{\gamma-1},
\end{align*}
for all $\gamma >1$, and therefore
\begin{equation*}
I \leq 8\sqrt{\pi}
\frac{\Gamma(\frac{\gamma -1}{2})}{\Gamma(\frac{\gamma}{2})} A n^{\gamma -1}l(n)
 = 2\pi  C(\gamma) A n^{\gamma -1}l(n).\qedhere
\end{equation*}
\end{proof}

{\it A renewal type example\/}.
Let $T_1$, $T_2$, $\dots$ be a sequence of non-negative iid $\mathbb{Z}$-valued random variables, and define the strongly aperiodic random walk $Y_i = \sum_{j=1}^i T_i$, for $i\geq 1$.
For $n\in \mathbb{N}$ let $N_n := \sum_{j=1}^\infty \mathbf{1}_{\{Y_j \leq n\}}$ be the number of renewals taking place up to time $n$.
\begin{proposition}
Suppose $\mathrm{E}T_i = \mu \in (0,\infty)$, and $f(\lambda) = E[\lambda^{T_1}]=1 +(\lambda-1)\mu+
R(1-\lambda)$ where $|R(1-\lambda)|\leq |1-\lambda|^{\delta} l(|1-\lambda|^{-1})$ for some
$\delta>0$ and non-decreasing function $l$. Then
\begin{equation*}
|\rE N_n - n/\rE T| \leq Cn^{1-\delta} l(n) \;\; \mbox{and}\;\;
|\rE N_n^2 - n^2/(\rE T_i)^2| \leq Cn^{2-\delta}l(n).
\end{equation*}
In particular $N_n /n \to 1/\mu$ a.s. as $n\to \infty$.
\end{proposition}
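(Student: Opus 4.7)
The plan is to apply Lemma~\ref{tauberian} to the generating functions of the errors $\rE N_n - n/\mu$ and $\rE N_n^2 - n^2/\mu^2$, and then to derive the almost-sure convergence $N_n/n \to 1/\mu$ from the resulting variance bound via Chebyshev and Borel--Cantelli. I would first record the identities
$$
\sum_{n\ge 0}\rE N_n\lambda^n=\frac{f(\lambda)}{(1-\lambda)(1-f(\lambda))},\qquad
\sum_{n\ge 0}\rE N_n^2\lambda^n=\frac{f(\lambda)(1+f(\lambda))}{(1-\lambda)(1-f(\lambda))^2},
$$
which follow from $T_i\ge 0$ and Fubini (the second one also using $Y_j\le Y_k$ for $j\le k$ to reduce the double sum to $\sum_{k\ge 1}(2k-1)\rP(Y_k\le n)$). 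Let $g_1,g_2$ denote the differences between these generating functions and the polynomial comparison series $\lambda/[\mu(1-\lambda)^2]$ and $\lambda(1+\lambda)/[\mu^2(1-\lambda)^3]$, respectively; both $g_i$ are analytic on $\{|\lambda|<1\}$ as power series with polynomially bounded coefficients.

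Next I would analyse the singular behaviour of $g_i$ at $\lambda=1$. Setting $u=1-\lambda$, writing $1-f(\lambda)=\mu u - R(u)$, and expanding $1/(1-f(\lambda))$ as a geometric series in $R(u)/(\mu u)$, one checks that the $u^{-2}$ and $u^{-3}$ poles of the two generating functions cancel against those of the comparison series, so that the residual singular parts of $g_1,g_2$ are driven by $R$. The hypothesis $|R(u)|\le |u|^\delta l(|u|^{-1})$ then yields estimates
$|g_1(\lambda)|\le A_1|1-\lambda|^{-(2-\delta)}l(|1-\lambda|^{-1})$ and $|g_2(\lambda)|\le A_2|1-\lambda|^{-(3-\delta)}l(|1-\lambda|^{-1})$
on a sector $\{\Re\lambda>\alpha\}\cap\{|\lambda|<1\}$ for some $\alpha\in(0,1)$. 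On the complementary set $\{\Re\lambda\le\alpha\}\cap\{|\lambda|\le 1\}$, continuity of $f$ on the closed disc combined with strong aperiodicity of $Y$ (which forces $|f(\lambda)|<1$ on $\{|\lambda|=1\}\setminus\{1\}$) bounds $|1-f(\lambda)|$ away from zero by compactness, giving uniform bounds $|g_i|\le K_i$. Feeding both estimates into Lemma~\ref{tauberian} with $\gamma_1=2-\delta$ and $\gamma_2=3-\delta$ produces the two stated moment bounds.

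Combining them yields $\var(N_n)\le C n^{2-\delta}l(n)$, hence $\var(N_n/n)=O(n^{-\delta}l(n))$. Chebyshev along the subsequence $n_k=\lfloor k^p\rfloor$ with $p\delta>1$ and Borel--Cantelli give $N_{n_k}/n_k\to 1/\mu$ almost surely; monotonicity of $n\mapsto N_n$ together with $n_{k+1}/n_k\to 1$ interpolates to all $n$. The main obstacle is the singularity analysis of the second step: one has to perform the geometric-series expansion carefully enough to see the exact cancellation between the leading singular parts of the renewal generating functions and of the polynomial comparison series, so that what survives is genuinely controlled by $R$ and the exponents $\gamma_i=i+1-\delta$ fed into Lemma~\ref{tauberian} are the correct ones. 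Once this is done, the lemma does all the work and the almost-sure step is a short Borel--Cantelli argument.
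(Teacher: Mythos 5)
Your proposal follows essentially the same route as the paper: you use the same renewal generating-function identities $a(\lambda)=f/[(1-\lambda)(1-f)]$ and $b(\lambda)=f(1+f)/[(1-\lambda)(1-f)^2]$, compare them with explicit rational series whose coefficients match $n/\mu$ and $n^2/\mu^2$, control the difference on $\{\Re\lambda>\alpha\}$ by a singularity analysis driven by $R$ and on $\{\Re\lambda\le\alpha\}$ by compactness/aperiodicity, feed these bounds into Lemma~\ref{tauberian}, and finish the a.s.\ statement with Chebyshev, Borel--Cantelli along a subsequence, and monotone interpolation. The only cosmetic deviations are your choice of comparison polynomials $\lambda/[\mu(1-\lambda)^2]$ and $\lambda(1+\lambda)/[\mu^2(1-\lambda)^3]$ (which differ from the paper's $1/[\mu(1-\lambda)^2]$ and $2/[\mu^2(1-\lambda)^3]$ only by lower-order terms) and your polynomial subsequence $n_k=\lfloor k^p\rfloor$ in place of the paper's geometric $n_m=[\rho^m]$ with the Breiman trick; both are valid.
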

{\it Proof\/}. For complex $\lambda$ let

$$a(\lambda) := \sum_{n=0}^\infty \lambda ^n \mathrm{E}(N_n), \quad  b(\lambda) := \sum_{n=0}^\infty \lambda ^n \mathrm{E}(N_n ^2), \quad |\lambda| <1.$$
An easy application of the renewal equation and Taylor analysis gives
\begin{eqnarray*}
a(\lambda) &=& \frac{f(\lambda)}{(1-\lambda)(1-f(\lambda))} =
\frac{1}{(1-\lambda)^2\mu} + O(|1-\lambda|^{-2+\delta} l(|1-\lambda|^{-1})\\
b(\lambda) &=& a(\lambda) +  2a(\lambda) \frac{f(\lambda)}{1-f(\lambda)}
=\frac{2}{(1-\lambda)^3\mu^2} + O(|1-\lambda|^{-3+\delta} l(|1-\lambda|^{-1}))
\end{eqnarray*}
Let $c_n = (n+1)/\mathrm{E}T$, $d_n = (n^2+3n+2)/(\mathrm{E}T)^2$ and define
$$A(\lambda) = \sum_{n=0}c_n \lambda ^n = \frac{1}{(1-\lambda)^2 \mathrm{E}T},\quad
B(\lambda) = \sum_{n=0}^\infty d_n \lambda ^n = \frac{2}{(1-\lambda)^3 (\mathrm{E}T)^2}.$$
Applying Lemma~\ref{tauberian} with $g(\lambda)=a(\lambda) - A(\lambda)$ and $g(\lambda) = b(\lambda) - B(\lambda)$ we prove the inequalities in the lemma.\\
To complete the proof, let $\rho>1$, and define $n_m = [\rho^m]$. Chebyshev's inequality and the previous calculations, give for any $\epsilon >0$
$$\mathrm{P}\left(\left|\frac{N_{n_m}}{n_m} - \frac{1}{\rE T}\right|\geq \epsilon\right)
\leq \frac{\rE (N_{n_m} - n_m / \rE T)^2}{\epsilon n_m ^2}\leq C[\rho^m]^{-\delta},$$
which is summable over $m$. The Borel-Cantelli lemma implies  a.s. convergence along the sequence $n_m$. The result follows from monotonicity of $N_n$ and a trick going back to Breiman, which can be found in \cite[Prop.2.2]{Plant02}

\subsection{Proof of Theorem~\ref{main}(i)}
Continuing from the decomposition given in the beginning of the section, let us first estimate $a_3(n)$.
\begin{equation}\label{a3}
\begin{split}
  a_3 (n)
&= \sum\limits_{A^3}\left[\rP(S_{i_1}
= S_{j_1},S_{i_2}=S_{j_2}) - 								
				 \rP(S_{i_1}=S_{j_1})\rP(S_{i_2}=S_{j_2})\right]\\
&= \sum\limits_{\mathbf{m}\in M_n}\rP(S_{m_3}=0)
	\bigl[ \rP(S_{m_2+m_4} = 0)
  	 - \rP(S_{m_2+m_3+m_4}=0)\bigr],
\end{split}
\end{equation}
where $M_n$ is the set of 5-tuples $(m_1,\dots, m_5)$ such that $m_1,m_2,m_4,m_5 \geq 0$, $m_3 >0$ and $m_1 + \cdots + m_5=n$, and using the characteristic function representation
\begin{align*}
  \rho_3 (\lambda)
  	&=  \sum_{n\geq 0} a_3 (n) \lambda^n\\
  	&= (1-\lambda)^{-2}(2\pi)^{-2} \iint\limits_{J^2} \frac{\lambda f(y)(1 - f(x))\rd x \rd y}
        {(1-\lambda f(x))^2(1-\lambda f(y))(1-\lambda f(x) f(y))},
\end{align*}
Similar power series $\rho_2 (\lambda)$ will be constructed and treated for the sequence $a_2 (n)$.
The complete computations are quite lengthy and involve asymptotic analysis of many multivariate integrals with complex parameter. However most of the integrals involved are analyzed in a similar fashion.
We show the key steps of the analysis for $\rho_3 (\lambda)$ and indicate important changes for $\rho_2 (\lambda)$.
\vskip 3pt
\subsubsection*{Lower bounds for $|1-\lambda f(t)|$ and $|1-\lambda f(t) f(s)|$}
To treat this integral we need lower bounds for quantities of the form $1-\lambda f(t)$.
Recall that $f(t) = 1 - \gamma |t| + R(t),$ where $R(t) = o(|t|)$, as $t\to 0$.
Let $\epsilon > 0$ be fixed and as close to zero as we desire.

First observe that outside the region $U_\epsilon = \{(t,s)\in J^2: |x|<\epsilon, |y|<\epsilon\}$
we have by aperiodicity of the random walk that there is a constant $C(\epsilon)$ such that $|f(t)| \leq C(\epsilon) <1$. This implies that there exists another constant $C>0$ such that
\begin{equation}\label{bbound}
|1-\lambda f(t)| \geq  C >0, \mbox{ and } |1-\lambda f(t) f(s)| \geq C >0.
\end{equation}

Since $R(t)=o(|t|)$ we have
that on the region $U_\epsilon$, $|R(t)| < \theta_\epsilon |t|$, for some positive $\theta_\epsilon \to 0$ as $\epsilon \to 0$.
Using the triangle inequality we have
\begin{align}\label{cbound1}
|1-\lambda f(t)| 	
&=	|1-\lambda + \lambda \gamma |t| - \lambda R(t)|\geq \left| |1-\lambda + \lambda\gamma |t|| -  |\lambda||R(t)|\right| \notag\\
&\geq  \left||1-\lambda + \lambda\gamma |t|| -  \theta_\epsilon |t|\right| \equiv h_\epsilon(t,\lambda)
\end{align}
and similarly for $|t|,|s|<\epsilon$ we have
\begin{equation}\label{cbound2}
|1-\lambda f(t)f(s)|
\geq 	|1-\lambda + \lambda \gamma (|t| + |s|)| - \Delta_\epsilon (|t|+ |s|) \equiv k_\epsilon(t,s,\lambda)
\end{equation}
where $\Delta_\epsilon  = \gamma^2 \epsilon + \gamma \theta_\epsilon + \theta_\epsilon^2$.
If for some $\alpha \in (0,1)$,
$\Re(\lambda) \leq \alpha$ then for $|x|<\epsilon$, using the real part as a lower bound we have
\begin{equation}\label{cbound5}
|1-\lambda f(t)| \geq 1-\alpha - (\gamma + \theta_\epsilon)\epsilon \geq C>0,
\end{equation}
for $\epsilon$ small enough.

Let $z_1 \equiv (1-\lambda)/|1-\lambda|$ and $z_2 \equiv \lambda \gamma$ and suppose now that
$\Re(\lambda) > \alpha$. Then
\begin{equation}\label{cbound3}
|z_1 + z_2 |t|| - \theta_\epsilon |t| \geq \Re(z_1 + z_2 |t|) - \theta_\epsilon |t| \geq C|t|,
\end{equation}
for $\epsilon$ small enough. If $|t|< \delta$, using the triangle inequality
\begin{equation}\label{cbound4}
|z_1 + z_2 |t|| - \theta_\epsilon |t| \geq 1 - |z_2|\delta - \theta_\epsilon \delta \geq C>0,
\end{equation}
for $\delta$ small enough.
\subsubsection*{Integral away from zero}
Let us first consider the integral outside the region $U_\epsilon$.
Thus we have using \eqref{bbound}-\eqref{cbound4} consecutively
\begin{eqnarray*}
\lefteqn{\Biggl|\,\, \iint\limits_{J^2 \cap \{|y|\geq \epsilon\}} \frac{\lambda f(y)(1 - f(x))\rd x \rd y}
        {(1-\lambda f(x))^2(1-\lambda f(y))(1-\lambda f(x) f(y))} \Biggr|}\\
&&\leq C \int\limits_{|x|<\epsilon} \frac{|x|\rd x }{|1-\lambda f(x)|^2}
 	\leq C\int_0^\epsilon \frac{x\rd x }{||1-\lambda+\lambda\gamma x| - \theta_\epsilon x|^2 } \\
&&\leq C \int_0^{\epsilon / |1-\lambda|} \frac{x\rd x }{||z_1+z_2 x| - \theta_\epsilon x|^2}
\leq C + C \int_\delta^{\epsilon/ |1-\lambda|} x^{-1}\rd x \\
&&\leq C\left(1+\log_+(|1-\lambda|^{-1})\right),
\end{eqnarray*}
The other cases follow similarly giving the same order and thus we have
\begin{equation*}
\rho_3 (\lambda) = (1-\lambda)^{-2}(2\pi)^{-2} \iint\limits_{U_\epsilon} \frac{\lambda f(y)(1 - f(x))\rd x \rd y}
        {(1-\lambda f(x))^2(1-\lambda f(y))(1-\lambda f(x) f(y))} + I(\lambda).
\end{equation*}
where $I(\lambda)$ is the error from integrating only over the region $U_\epsilon$ and satisfies
$|I(\lambda)| \leq C|1-\lambda|^{-2}\log_+(|1-\lambda|^{-1})$.  By \eqref{cbound5} for $\Re(\lambda) \leq \alpha$ and some constant $K$, we have $|\rho_3(\lambda)|\leq K$.
From here on we shall assume that $\Re(\lambda) > \alpha$.
\vskip 3pt
\subsubsection*{A generic integral}
Since we are integrating over $U_\epsilon$ we would like to use the expansion (i) under the integral sign to simplify the
calculations. This will introduce a new error $E$ in our expansion of $\rho_3 (\lambda)$ which is given by
\begin{multline*}
E = (1-\lambda)^{-2}(2\pi)^{-2} \iint\limits_{U_\epsilon} \frac{\lambda f(y)(1 - f(x))\rd x \rd y}
        {(1-\lambda f(x))^2(1-\lambda f(y))(1-\lambda f(x) f(y))}\\
     - (1-\lambda)^{-2}(2\pi)^{-2} \iint\limits_{U_\epsilon} \frac{\lambda \gamma |x|\rd x \rd y}
        {(1-\lambda +\lambda\gamma |x|)^2 (1-\lambda +\lambda \gamma |y|)(1-\lambda + \lambda\gamma |x| |y|)}.
\end{multline*}
To obtain a bound on this error and to simplify the calculations we successively examine the errors from replacing each term in the integrand by its expansion.

We demonstrate the calculation for the first case, since the rest of the errors can be modified and reduced to this.
In the following, $C(\epsilon)$ is a positive constant
depending on $\epsilon$ such that $C(\epsilon)\to 0$ as $\epsilon \to 0$. Using the expansion (i), \eqref{cbound1} and \eqref{cbound2} we have
\begin{align*}
|E_1|
&\leq |1-\lambda|^{-2} \iint\limits_{U_\epsilon} \frac{\bigl| f(y)(1 - f(x)) - \gamma |x|\bigr|\rd x \rd y}
        {|1-\lambda f(x)|^2 |1-\lambda f(y)| |1-\lambda f(x) f(y)|}\\
&\leq C(\epsilon)|1-\lambda|^{-2} \iint\limits_{U_\epsilon} \frac{|x|\rd x \rd y}
        {h_\epsilon (x,\lambda)^2 h_\epsilon (y,\lambda) k_\epsilon (x,y,\lambda)        }\\
&\leq C(\epsilon)|1-\lambda|^{-3} \int_0^{\infty}\!\!\!\int_0^{\infty}\!\!\!
 \frac{x \rd x \rd y}{\tilde{h}_\epsilon (x,\lambda)^2 \tilde{h}_\epsilon (y,\lambda) \tilde{k}_\epsilon (x,y,\lambda)}
\end{align*}
where it is convenient to write
\begin{align*}
\tilde{h}_\epsilon (x,\lambda) &= \left| z_1 + z_2 |x| \right| - \theta_\epsilon |x|,\\
\tilde{k}_\epsilon (x,y,\lambda) &= \left| z_1 + z_2 (|x|+|y|)\right| - \Delta_\epsilon (|x|+|y|).
\end{align*}
Using \eqref{cbound3} and \eqref{cbound4}
\begin{align*}
|E_1|
&\leq C(\epsilon)|1-\lambda|^{-3} \int_0^{\infty}\!\!\!\int_0^{\infty}\!\!\!
 \frac{ \rd x \rd y}{\tilde{h}_\epsilon (x,\lambda) \tilde{h}_\epsilon (y,\lambda) \tilde{k}_\epsilon (x,y,\lambda)}\\
&\leq C(\epsilon)|1-\lambda|^{-3}\left[C + \int_\delta^\infty\!\!\!\int_\delta^\infty \frac{\rd x \rd y}{xy(x+y)}\right]
\leq C(\epsilon)|1-\lambda|^{-3},
\end{align*}
uniformly in $\lambda$. The other errors follow similarly with the same bounds, giving the expansion
\begin{align*}
\rho_3(\lambda) &= 4(1-\lambda)^{-2}(2\pi)^{-2}\\
&\quad\times \int_0^\epsilon\!\!\int_0^\epsilon \frac{\lambda \gamma x\rd x \rd y}
        {(1-\lambda +\lambda\gamma x)^2 (1-\lambda +\lambda \gamma y)(1-\lambda + \lambda\gamma (x+ y))} +E + I
\end{align*}
where $|E| \leq C(\epsilon)|1-\lambda|^{-3}$ and $|I|\leq |1-\lambda|^{-2}\log_+|1-\lambda|^{-1}$.
\vskip 3pt
\subsubsection*{Moving from $U_\epsilon$ to $\mathbb{R}^2$}Finally
we simplify the integral by integrating over the positive half-axis, rather than $(0,\epsilon)$ giving
\begin{align*}
\rho_3(\lambda) &= 4(1-\lambda)^{-2}(2\pi)^{-2}\\
&\quad\times \int_0^\infty\!\!\!\int_0^\infty \frac{\lambda \gamma x\rd x \rd y}
        {(1-\lambda +\lambda\gamma x)^2 (1-\lambda +\lambda \gamma y)(1-\lambda + \lambda\gamma (x+ y))}\\
        &\qquad +E + I - F
\end{align*}
where $F$ is the integral over $V=[0,\infty)^2/[0,\epsilon)^2$. Observing that this region can be split
into $(\epsilon,\infty)\times [0,\epsilon)$, $[0,\epsilon)\times(\epsilon,\infty)$ and $(\epsilon,\infty)\times (\epsilon,\infty)$ we proceed to bound the integral $F$.
The first case is
\begin{align*}
|F_1|
&\leq C|1-\lambda|^{-2} \int_\epsilon^\infty\!\!\!\!\int_0^\epsilon \frac{x\rd x \rd y}
		{|1-\lambda+\lambda\gamma x|^2 |1-\lambda + \lambda \gamma y| |1-\lambda + \lambda\gamma (x+y)|}\\
&\leq C|1-\lambda|^{-3} \int_{\epsilon/|1-\lambda|}^\infty \int_0^{\epsilon/|1-\lambda|} \!\!\!\frac{x\rd x \rd y}
 		{\tilde{h}_\epsilon (x,\lambda)^2 \tilde{h}_\epsilon (y,\lambda) \tilde{k}_\epsilon (x,y,\lambda)}\\		
&\leq C|1-\lambda|^{-2}\log_+|1-\lambda|^{-1}.		
\end{align*}
The second case is identical by symmetry. Finally for the third case
\begin{align*}
|F_3|
&\leq C|1-\lambda|^{-3} \int_{\epsilon/|1-\lambda|}^\infty \int_{\epsilon/|1-\lambda|}^\infty \frac{\rd x \rd y}
		{xy(x+y)} \leq C|1-\lambda|^{-2}.		
\end{align*}
Assume for the moment that $\lambda$ is real and lies in the interval $(1/2,1)$ in order to calculate the integral
\begin{equation*}
\int_0^\infty\!\!\!\int_0^\infty\!\!\! \frac{\lambda \gamma x\rd x \rd y}
        {(1-\lambda +\lambda\gamma x)^2 (1-\lambda +\lambda \gamma y)(1-\lambda + \lambda\gamma (x+ y))}
= (1-\lambda)^{-1}(\lambda\gamma)^{-2}.
\end{equation*}
By analytic continuation this also holds for all complex $\lambda$ with $|\lambda|<1$.
Finally we now have
\begin{equation*}
\rho_3(\lambda) = (1-\lambda)^{-3}(\pi\gamma)^{-2} + \mathcal{E}
\end{equation*}
where $\mathcal{E}$ is the total error and as we have shown for $\Re(\lambda)>\alpha$ it satisfies
$$\mathcal{E} \leq C|1-\lambda|^{-2} + C|1-\lambda|^{-2}\log_+|1-\lambda|^{-1} + C(\epsilon)|1-\lambda|^{-3}.$$
If we let
$$g(\lambda) = \sum_{n=0}^\infty c_n \lambda^n = (\pi\gamma)^{-2}(1-\lambda)^{-3},$$
 by standard calculations we have $c_n = (n^2+3n+2)/2\pi^2\gamma^2$. By application of Lemma~\ref{tauberian} with $f(\lambda) = \rho_3(\lambda)-g(\lambda)$,
$$\left|a_3(n) - \frac{1}{2\pi^2\gamma^2}n^2\right| \leq D(\epsilon)n + D(\epsilon)n\log(n) + C(\epsilon)n^2,$$
where as $\epsilon \to 0$, $C(\epsilon)\to 0$ while $D(\epsilon)$ may be unbounded,
implying that $a_3(n)\thicksim n^2/2\pi^2\gamma^{2}$.

Let us now briefly consider the term $a_2(n)$.
\begin{align*}
 a_2 (n)&= \sum\limits_{A^2}
  	\left[\rP(S_{i_1} = S_{j_1},S_{i_2}=S_{j_2}) - \rP(S_{i_1}=S_{j_1})\rP(S_{i_2}=S_{j_2})\right]\\
  	&= \sum\limits_{\mathbf{m}\in M_n}\Bigl[\sum\limits_{x\in \mathbb{Z}} \rP(S_{m_2} = x)\rP(S_{m_3} = -x)\rP(S_{m_4} = x)\\
  	&\qquad - \rP(S_{m_2+m_3}=0)\rP(S_{m_3+m_4}=0)\Big],
\end{align*}
where $M_n$ is the set of 5-tuples $(m_1,\dots, m_5)$ such that $m_1,m_2,m_5 \geq 0$, $m_3, m_4 >0$ and $m_1 + \cdots + m_5=n$.
Then we have
\begin{align*}
   \rho_2 (\lambda)
  &=	(1-\lambda)^{-2}\lambda^2 (2\pi)^{-2}\\
  &\quad \times
  	\iint_{J^2}  \frac{ f(x)\rd x \rd y}{(1-\lambda  f(x))(1-\lambda  f(y))}
  	\left[
  		\frac{ f(x +y)}{1-\lambda f(x+y)} - \frac{ f(y)^2}{1-\lambda  f(x) f(y)} \right]
\end{align*}
and by calculations similar to the first term we have
\begin{multline*}
\rho_2 (\lambda)
= (1-\lambda)^{-2}\lambda^2 (2\pi)^{-2}\\
\times
 \biggl[\iint_{\mathbb{R}^2}
	\frac{\rd x \rd y}{(1-\lambda +\lambda\gamma |x|) (1-\lambda +\lambda\gamma |y|) (1-\lambda +\lambda\gamma |x+y|)}\\
 - \iint_{\mathbb{R}^2}
\frac{\rd x \rd y}{(1-\lambda +\lambda\gamma |x|) (1-\lambda +\lambda\gamma |y|) (1-\lambda +\lambda\gamma (|x|+|y|))} \biggr]  + \mathcal{E},
\end{multline*}
where $\mathcal{E}$ is the total error and satisfies
$$|\mathcal{E}| \leq C|1-\lambda|^{-2} + C|1-\lambda|^{-2}\log_+|1-\lambda|^{-1} + C(\epsilon)|1-\lambda|^{-3}.$$
As before, for $\lambda \in (1/2,1)$,
\begin{gather*}
\iint_{\mathbb{R}^2}
	\frac{\rd x \rd y}{(1-\lambda +\lambda\gamma |x|) (1-\lambda +\lambda\gamma |y|) (1-\lambda +\lambda\gamma |x+y|)}
=	(1-\lambda)^{-1} \left(\frac{\pi}{\lambda\gamma}\right)^2,\\
\iint_{\mathbb{R}^2}
\frac{\rd x \rd y}{(1-\lambda +\lambda\gamma |x|) (1-\lambda +\lambda\gamma |y|) (1-\lambda +\lambda\gamma (|x|+|y|))}
= \frac{2}{3}(1-\lambda)^{-1}\left(\frac{\pi}{\lambda\gamma}\right)^2.
\end{gather*}
By analytic continuation these two expressions hold for complex $\lambda$ with $|\lambda|<1$ and
thus we have the following expansion for $\rho_2 (\lambda)$,
\begin{equation*}
\rho_2(\lambda) = \frac{1}{12}\gamma^{-2} (1-\lambda)^{-3} + \mathcal{E}.
\end{equation*}
Thus using Lemma~\ref{tauberian} and calculations similar to $\rho_3(\lambda)$, we have that
$a_2 (n)\thicksim n^2/24\gamma^2$.

It is straightforward to show that $b_2(n)\thicksim a_3(n)$ and $b_3(n)\thicksim a_2 (n)$ and thus
we have that
$$\var(V_n) \thicksim 4\left( \frac{1}{12\gamma^2} + \frac{1}{\pi^{2}\gamma^{2}} \right)n^2.$$
\subsection{Proof of Theorem~\ref{main}(ii)}
We now consider the case with $d=2$, with a non-singular covariance matrix $\Sigma$ which implies that
$f(t) = 1 - \frac{1}{2}\braket{\Sigma t}{t} + R(t),$ where $R(t) = o(|t|^2)$, as $|t| \to 0$
for $t\in J=[-\pi,\pi)^2$.

By working with complex $\lambda$ and applying Lemma~\ref{tauberian}, we are able to avoid the extra assumptions on the random walk present required in \cite{Bolt89,Cerny07} as discussed in the beginning of this section.

We continue with our calculation, with $a_3(n)$ as defined in \eqref{a3}.
\begin{equation*}
  \rho_3 (\lambda)
  	= (1-\lambda)^{-2}(2\pi)^{-4} \iint_{J^2} \frac{\lambda f(t_2)(1 -  f(t_1))\rd t_1 \rd t_2}
        {(1-\lambda f(t_1))^2(1-\lambda f(t_2))(1-\lambda f(t_1) f(t_2))},
\end{equation*}
where $J = [-\pi,\pi)^2$.
Using the Taylor expansion of $f$ we are able to deduce lower bounds for the quantities
$|1-\lambda f(t_1)|$ and $|1-\lambda f(t_1) f(t_2)|$ for $|t_1|,|t_2|<\epsilon$. For convenience we
write $g(t_1,t_2)\equiv \braket{\Sigma t_1}{t_1}+\braket{\Sigma t_2}{t_2}$.
\begin{align*}
|1-\lambda f(t_1)| &\geq \left| \left|1-\lambda + \tfrac{\lambda}{2}\braket{\Sigma t_1}{t_1}\right| - \theta_\epsilon \braket{\Sigma t_1}{t_1}\right|\\
|1-\lambda f(t_1) f(t_2)| &\geq \left| \left|1-\lambda + \tfrac{\lambda}{2}g(t_1,t_2)\right| - \Delta_\epsilon g(t_1,t_2)\right|
\end{align*}
and for $z_1\equiv (1-\lambda)/|1-\lambda|$, $z_2 = \lambda/2$ we have for $\Re(\lambda)>\alpha \in (0,1)$
\begin{align*}
|z_1 + z_2 \braket{\Sigma t_1}{t_1}| - \theta_\epsilon \braket{\Sigma t_1}{t_1} &\geq C(1\wedge \braket{\Sigma t_1}{t_1})\\
|z_1 + z_2 g(t_1,t_2)| - \Delta_\epsilon g(t_1,t_2)
&\geq C (1 \wedge g(t_1,t_2))
\end{align*}
for positive $\theta_\epsilon, \Delta_\epsilon \to 0$ as $\epsilon\to 0$.
Using these bounds we can estimate
that the integral $I$ outside $U_\epsilon$ satisfies
$$|I| \leq C(\epsilon)|1-\lambda|^{-3} + C|1-\lambda|^{-2 } \log_+|1-\lambda|^{-1},$$
where $C(\epsilon)>0$ is a constant such that $C(\epsilon)\to 0$ as $\epsilon \to 0$. Once again for $\Re(\lambda)\leq \alpha$ we have
$|\rho_3(\lambda)|\leq K$. Hence we assume from now on that $\Re(\lambda)> \alpha$.
Thus we have
\begin{align*}
\rho_3 (\lambda)
&=(1-\lambda)^{-2}(2\pi)^{-4}|\Sigma|^{-1} \\
     &\qquad\times\iint\limits_{U_\epsilon} \frac{\frac{\lambda}{2}|t_1|^2\rd t_1 \rd t_2}
        {(1-\lambda + \tfrac{\lambda}{2}|t_1|^2)^2(1-\lambda+\tfrac{\lambda}{2}|t_2|^2) (1-\lambda + \frac{\lambda}{2}(|t_1|^2 +|t_2|^2))}\\
       &\qquad + I + E
\end{align*}
where $E$ is the error arising from using again the Taylor expansion under the integral sign.
In a similar manner to Section 2 we obtain a bound for the error $E$
$$|E| \leq C(\epsilon)|1-\lambda|^{-3}.$$
Finally we can replace the area of integration by the real plane giving error
$$|F|\leq C|1-\lambda|^{-2} \log_+|1-\lambda|^{-1}.$$
Now for real $\lambda\in (1/2,1)$ we have after changing to polar coordinates
\begin{align*}
\lefteqn{ \iint\limits_{\mathbb{R}^2\times \mathbb{R}^2} \frac{\frac{\lambda}{2} |t_1|^2\rd t_1 \rd t_2}
        {(1-\lambda + \tfrac{\lambda}{2}|t_1|^2)^2(1-\lambda+\tfrac{\lambda}{2}|t_2|^2) (1-\lambda + \frac{\lambda}{2}(|t_1|^2 +|t_2|^2))}}\\
&=(2\pi)^2 (1-\lambda)^{-1}\lambda^{-2}\int_0^\infty\!\!\!\int_0^\infty \frac{r\rd r \rd s}
        {(1 + r)^2(1+s) (1 + r +s)}        \\
&=(2\pi)^2 \lambda^{-2} (1-\lambda)^{-1}.
\end{align*}
Thus by analytic continuation we have for all $|\lambda|<1$ that
$$\rho_3(\lambda) = (2\pi)^{-2}|\Sigma|^{-1}(1-\lambda)^{-3} + \mathcal{E},$$
where $\mathcal{E}$ is the total error and satisfies
$$|\mathcal{E}| \leq C(\epsilon)|1-\lambda|^{-3} + C|1-\lambda|^{-2} + C|1-\lambda|^{-2}\log_+|1-\lambda|^{-1}.$$
Application of Lemma~\ref{tauberian} with $g(\lambda) = \rho_3(\lambda) - (2\pi)^{-2}|\Sigma|^{-1}(1-\lambda)^{-3},$ \ gives
$$\Big|a_3(n) - \frac{1}{8\pi^2|\Sigma|}n^2 \Big|\leq C(\epsilon)n^2+D(\epsilon)n\log (n) + D(\epsilon)n\ ,$$
where again, as $\epsilon \to 0$, $C(\epsilon)\to 0$ while $D(\epsilon)$ may be unbounded,
implying  $a_3(n) \thicksim n^2/8\pi^2 |\Sigma|$.

Calculation of the order of $a_2(n)$ is similar giving
$$a_2(n)\thicksim \frac{1}{2}(2\pi)^{-2}|\Sigma|^{-1} \kappa n^2,$$
where $\kappa$ was defined in Theorem (ii). Finally it is straightforward to show that
$b_2(n)\thicksim a_3(n)$ and $b_3(n)\thicksim a_2 (n)$ implying the desired approximation
$$\var(V_n) \thicksim 4(2\pi)^{-2}|\Sigma|^{-1}(1+\kappa)n^2.$$
\subsection{Proof of Theorem~\ref{thmbounds}}
We only consider a two-dimensional aperiodic random walk in $\mathbb{Z}^2$
with characteristic function satisfying
$|1-f(t)|\leq \alpha |t|^2$, for $|t|\in [-\pi,\pi)^2$ and
$|f(t)|\leq \re^{-\alpha |t|^2}$ for some $\alpha >0$.
Under these assumptions we also have $|1-f^j(t)| \leq C ((t^2 j)\wedge 1)$.

Then we have for $a_3(n)$ defined in \eqref{a3}
\begin{align*}
a_3 (n)
&= \sum\limits_{\mathbf{m}\in M_n}
\left[ \rP(S_{m_2+m_4} = 0,S_{m_3}=0) - \rP(S_{m_2+m_3+m_4}=0)\rP(S_{m_3}=0)\right]\\
&= (2\pi)^{-4}\sum\limits_{\mathbf{m}\in M_n} \iint_{J^2}f^{m_3}(t) f^{m_2 + m_4}(y) (1-f^{m_3}(y)) \rd t \rd y
\end{align*}
where $M_n$ is the set of 5-tuples $(m_1,\dots, m_5)$ such that $m_1,m_2,m_4,m_5 \geq 0$ and $m_3 >0$.
Then for $J=[-\pi,\pi)^2$
\begin{align*}
|a_3 (n)|
&\leq C\sum\limits_{\mathbf{m}\in M_n} \iint\limits_{J^2} |f(t)|^{m_3} |f(y)|^{m_2+m_4} |1-f^{m_3}(y)|\rd t \rd y\\
&\leq Cn \sum_{i=1}^n\sum_{j=0}^n (j+1)\Big(\int_0^\pi \re^{-i\alpha r} \rd r\Big)
	\Big(\int_0^\pi \re^{-j\alpha x}((ix)\wedge 1)\rd x\Big)\\
&\leq Cn \sum_{i=1}^n\sum_{j=0}^n (j+1)\frac{1}{i\alpha}
\Big[ \mathbf{1}_{\{j=0\}} + \Big(\frac{i}{\alpha^2 j^2}\Big(1\wedge \frac{\alpha j}{i} \Big) + \frac{\re^{-\alpha j/i}}{j}\Big)\mathbf{1}_{\{j\geq 1\}}\Big]\\
&=cn \sum_{i=1}^n \frac{1}{\alpha i}\Big[1 + \alpha^{-2} (1+\alpha)\sum_{j=1}^n\frac{2i}{j}\Big(1\wedge\frac{j}{i}\Big) + 2\sum_{i=1}^n \re^{-\alpha j/i}\Big]\\
&\leq C(\alpha^{-1} + \alpha^{-3})n \sum_{i=1}^n \left(\ln(\frac{n}{i}) + 1\right)\\
&\leq C(\alpha^{-1} + \alpha^{-3}) n^2 \int_0^1\Big(  \ln \Big(\frac{1}{x}\Big) + 1\Big)\rd x \leq C(\alpha^{-1} + \alpha^{-3})n^2.
\end{align*}

To treat $a_2(n)$ we apply Fourier transform to the terms in the difference separately (c.f. \cite{Bolt89}).
\begin{multline*}
\sum\limits_{\mathbf{m} \in M_n} \rP(S_{m_2 + m_3})\rP(S_{m_3 + m_4})
\leq \sum\limits_{\mathbf{m} \in M_n} \iint_{\mathbb{R}^4} \re^{-\alpha |t|^2 (m_2 + m_3) } \re^{-\alpha |y|^2 (m_3 + m_4)} \rd t \rd y\\
\leq \frac{C n}{\alpha^2} \sum\limits_{m_2,m_3,m_4} \frac{1}{m_2 + m_3} \frac{1}{m_3 + m_4}
\leq \frac{Cn}{\alpha^2} \sum\limits_{m_3 =1}^{n} \left(\sum\limits_{i=m_3}^{2n} \frac{1}{i}\right)^2 \leq \frac{Cn^2}{\alpha^2}
\end{multline*}

To treat the other term of the difference, we first condition on $S_{m_3}$ and then apply Fourier transform to get
\begin{align*}
&\sum\limits_{\mathbf{m} \in M_n} \rP(S_{m_2} = -S_{m_3}, S_{m_4} = -S_{m_3})\\
&\leq
\sum\limits_{\mathbf{m} \in M_n} \iint_{\mathbb{R}^4} \re^{-\alpha m_2 |t|^2 }\re^{-\alpha m_4 |y|^2 }
\re^{-\alpha m_3 |t+y|^2 } \rd t \rd y\\
&\leq \frac{C n}{\alpha ^2} \sum\limits_{m_2,m_3,m_4}
\!\!\!\big[ (m_2 + m_3)(m_4 + m_3)(m_2 m_4 + m_3 (m_2 + m_4))\big]^{-1/2} \leq \frac{C n^2}{\alpha ^2},
\end{align*}
where to compute the integral we diagonalized the quadratic form.

\section{Proof of Theorem~\ref{clt}}
We prove weak convergence of the laws of $Y_n(t)$ in $D[0,1]$ by first showing convergence of the finite dimensional distributions and then proving tightness. Before we can proceed with the proof we need a few results on the local times and self-intersections of the random walk.
\subsection{Auxiliary results\label{aux}}
Letting $N_n (x) = \sum_{i=0}^n \mathbf{1}_{\{S_i = x\}}$ denote the local time at site $x$ up to time $n$ we can write
$$Z_n = \sum_{i=0}^n \xi (S_i) = \sum_{x\in \mathbb{Z}} N_n (x) \xi _x.$$
In this direction we now give the following result on the asymptotic moments of the self-intersection local time $V_n$,
For $\alpha \in \mathbb{Z}$, let $N_{\alpha}(n) = \sum_{j=0}^n \mathbf{1}_{S_j = \alpha}$.
\begin{lemma}\label{lem:aux}
Under the assumptions of Theorem~\ref{clt} then
$\rE(V_n) \thicksim 2n \log n/\pi \gamma$,
$V_n / \rE V_n \to 1$, and $\sup_{\alpha} N_{\alpha}(n) = o(n^\epsilon)$, a.s. for each $\epsilon >0$.
If in addition $0<a<b$ then
$$\sum_{j=1}^{[an]}\sum_{i=[an]+1}^{[bn]} \mathbf{1}(S_i = S_j) = o(n\log n) \text{a.s. as $n\to \infty$}.$$
\end{lemma}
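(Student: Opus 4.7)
My plan is to establish the four claims in the listed order, bootstrapping each from its predecessors.

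\emph{Expected self-intersections.} Write $\rE V_n = (n+1) + 2\sum_{k=1}^n (n+1-k)\rP(S_k=0)$ and use the inversion formula $\rP(S_k=0) = (2\pi)^{-1}\int_{-\pi}^\pi f(t)^k \rd t$. Splitting the integral at $|t|=\epsilon$: the tail is exponentially small in $k$ by strong aperiodicity, while near the origin $f(t)^k$ is well approximated by $\re^{-\g k|t|}$, whose full integral equals $2/(\g k)$. This gives the local limit estimate $\rP(S_k=0) \sim 1/(\pi\g k)$, and a routine harmonic-sum computation yields $\rE V_n \sim 2n\log n /(\pi\g)$.

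\emph{Strong law for $V_n$.} Theorem~\ref{main}(i) furnishes $\var(V_n) = O(n^2)$, so $\var(V_n)/(\rE V_n)^2 = O((\log n)^{-2})$. Fix $\rho > 1$ and set $n_m = [\rho^m]$. Chebyshev's inequality and Borel--Cantelli give $V_{n_m}/\rE V_{n_m}\to 1$ a.s. For a general $n_m \le n < n_{m+1}$, monotonicity of both $V_n$ and $\rE V_n$ yields
\[
\frac{V_{n_m}}{\rE V_{n_{m+1}}} \le \frac{V_n}{\rE V_n} \le \frac{V_{n_{m+1}}}{\rE V_{n_m}},
\]
and since $\rE V_{n_{m+1}}/\rE V_{n_m} \to \rho$ we obtain $1/\rho \le \liminf \le \limsup \le \rho$ a.s. Letting $\rho \downarrow 1$ along rationals concludes.

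\emph{Uniform local time bound.} A standard Kac-type identity (expanding $N_0(n)^r = \sum_{k_1,\dots,k_r\le n} \mathbf{1}(S_{k_i}=0)$ and using the Markov property at successive returns to $0$) yields $\rE N_0(n)^r \le r!(\rE N_0(n))^r$, hence $\rE \exp(\lambda N_0(n)) \le (1-\lambda\rE N_0(n))^{-1}$ for $\lambda\rE N_0(n) < 1$. Markov's inequality with $\lambda = 1/(2\rE N_0(n))$ gives $\rP(N_0(n)\ge k) \le 2\exp(-k/(2\rE N_0(n))) \le 2\exp(-ck/\log n)$ by Part~1. The strong Markov property at the first visit to $\alpha$ dominates $N_\alpha(n)$ by an independent copy of $1+N_0(n)$, so the same bound applies for every $\alpha$. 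Since at most $n+1$ sites are visited,
\[
\rP\!\Bigl(\sup_\alpha N_\alpha(n) \ge n^\epsilon\Bigr) \le 2(n+1)\exp(-cn^\epsilon/\log n),
\]
which is summable; monotonicity of $\sup_\alpha N_\alpha(n)$ in $n$ together with Borel--Cantelli finishes.

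\emph{Cross-term estimate.} Expand
\[
V_{[bn]} = V_{[an]} + 2\sum_{j=0}^{[an]}\sum_{i=[an]+1}^{[bn]}\mathbf{1}(S_i=S_j) + W_n,
\]
where $W_n = \sum_{[an]<i,j\le [bn]}\mathbf{1}(S_i=S_j)$ is a functional of $X_{[an]+2},\dots,X_{[bn]}$ alone and therefore has the law of $V_{[bn]-[an]-1}$. Parts~1--2 give the a.s. asymptotics $V_{[bn]}\sim(2b/\pi\g)n\log n$, $V_{[an]}\sim(2a/\pi\g)n\log n$ and $W_n\sim(2(b-a)/\pi\g)n\log n$, whose leading coefficients cancel, so the cross term is $o(n\log n)$ a.s. Removing the $j=0$ row contributes at most $N_0([bn]) = o(n^\epsilon)$ by Part~3, yielding the statement. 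The main technical hurdle is Part~2: the $(\log n)^{-2}$ decay of $\var(V_n)/(\rE V_n)^2$ is just enough to make Borel--Cantelli summable along a geometric subsequence, and the monotonicity interpolation relies on having both $V_n$ and $\rE V_n$ increasing in $n$.
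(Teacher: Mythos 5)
Your Parts~1--3 are correct. The local limit estimate for $\rP(S_k=0)$ in Part~1, the Chebyshev/Borel--Cantelli argument along a geometric subsequence followed by monotone interpolation in Part~2, and the Kac-type moment bound combined with the strong Markov property and a union bound over visited sites in Part~3 are all sound. The paper itself supplies essentially no proof of this lemma --- it cites Guillotin-Plantard and Prieur for $V_n/\rE V_n\to 1$ and asserts the rest ``can be easily adapted'' from Bolthausen --- so your reconstruction of these three parts is genuinely useful and, as far as I can tell, matches what those references do.

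Part~4, however, contains a real gap. From $V_{[bn]} = V_{[an]} + 2C_n + W_n$, with $W_n = \sum_{[an]<i,j\leq[bn]}\mathbf{1}(S_i=S_j)$, you conclude that ``Parts~1--2 give \ldots $W_n\sim (2(b-a)/\pi\g)n\log n$ a.s.'' This step is not justified. Part~2 is an almost-sure statement about the \emph{single} sequence $(V_m)_{m\ge 1}$ evaluated along the original random-walk path; the random variable $W_n$ is only \emph{equal in law} to $V_{[bn]-[an]-1}$. Each $W_n$ is a functional of the increments $X_{[an]+2},\dots,X_{[bn]}$, and as $n$ grows this window shifts, so $(W_n)_n$ is a different sequence from $(V_m)_m$. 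Equality in distribution for each fixed $n$ does not transport an almost-sure limit from one sequence to the other. Nor can you simply rerun the Part~2 argument for $W_n$: Chebyshev and Borel--Cantelli do give $W_{n_m}/\rE W_{n_m}\to1$ along $n_m=[\rho^m]$, but the interpolation step then fails because $W_n$ is \emph{not} monotone in $n$ --- increasing $n$ moves the left endpoint $[an]$ to the right and thereby \emph{removes} pairs $(i,j)$ from the count as well as adding new ones, so there is no sandwich $W_{n_m}\le W_n\le W_{n_{m+1}}$. To close the gap one would need, for $n\in[n_m,n_{m+1})$, to bracket $W_n$ between the self-intersection counts over the fixed windows $([an_{m+1}],[bn_m]]$ and $([an_m],[bn_{m+1}]]$ and show the gap between the brackets is $o(n_m\log n_m)$ as $\rho\downarrow1$; or, equivalently, estimate $\rE C_n$ and $\var C_n$ directly and carry out an analogous bracketing for $C_n$ (which is likewise non-monotone). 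None of this is in your write-up, and without it the a.s.\ bound on the cross-term remains unproved.
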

The proof of a.s. convergence of $V_n / \rE V_n \to 1$ is essentially given in \cite{Plant08} but relies heavily on the bound $\var(V_n) = O(n^2)$.
The rest be easily adapted from \cite{Bolt89}.

We are now ready to show convergence of finite dimensional distributions.
Let $a_1,\dots, a_m \in \mathbb{R}$, $0=t_0<t_1<\cdots < t_m$ be given and write
\begin{equation*}\label{eqn1}
\sum_{j=1}^m a_j (Y_n (t_j) - Y_n (t_{j-1}))
= \sum_{\alpha \in \mathbb{Z}} \sum_{j=1}^m a_j \big(N_{\alpha}([nt_j]) - N_{\alpha}([nt_{j-1}]) \big)\xi(\alpha)/d_n.
\end{equation*}
where $d_n = \sigma \sqrt{2n \log n}/ \sqrt{\pi \gamma}$.
Let $\mathcal{A} = \sigma (X_1,X_2,\dots)$, the $\sigma$-algebra generated by the random walk increments. Conditional on $\mathcal{A}$, the above expression is a sum of independent random variables with non-identical distributions.
To simplify notation we write
\begin{align*}
s_n^2 &=d_n^{-2}\sigma^2 \sum_{\alpha \in \mathbb{Z}} \Big(\sum_{j=1}^m a_j \big(N_{\alpha}([nt_j]) - N_{\alpha}([nt_{j-1}]) \big)\Big)^2,\\
C(n,\alpha) &= d_n ^{-1}\sum_{j=1}^m a_j \big(N_{\alpha}([nt_j]) - N_{\alpha}([nt_{j-1}])\big).
\end{align*}
We proceed by checking if the Lindeberg condition is satisfied conditionally on $\mathcal{A}$.
It suffices to show that for all $\epsilon >0$, for a.e. path
$$s_n ^{-2} \sum_{\alpha \in \mathbb{Z}}\rE\Bigl[ C(n,\alpha)^2 \xi(\alpha)^2 \mathbf{1}\{C(n,\alpha)\xi(\alpha) \geq \epsilon s_n\} \Bigl| \mathcal{A} \Bigr] \to 0,\quad n\to \infty.$$
Using the results of Lemma~\ref{lem:aux} it can be shown that
$$d_n^{-2} \sum_{\alpha \in \mathbb{Z}} \Big(\sum_{j=1}^m a_j \big(N_{\alpha}([nt_j]) - N_{\alpha}([nt_{j-1}]) \big)\Big)^2
\to \sigma^{-2} \sum_{j=1}^m a_j ^2 (t_j - t_{j-1}),$$
a.s. as $n\to \infty$, while by Lemma~\ref{lem:aux}(ii) we have
$$\sum_{j=1}^m a_j \big(N_{\alpha}([nt_j]) - N_{\alpha}([nt_{j-1}]) \big) = o(n^\delta)\quad a.s.$$
as $n\to \infty$ for any $\delta >0$.
These facts together imply that $s_n/C(n,\alpha) \to \infty$,
and by the square integrability of $\xi(\alpha)$
%
%
%
\begin{align*}
\lefteqn{s_n^{-2}\sum_{\alpha \in \mathbb{Z}}\rE\Big( C(n,\alpha)^2 \xi(\alpha)^2 \mathbf{1}\{\xi(\alpha)^2 \geq \epsilon s_n^2/C(n,\alpha)^2 \}\Big| \mathcal{A} \Big.\Big)}\\
&= C \rE\Big(  \xi(\alpha)^2 \mathbf{1}\{\xi(\alpha)^2 \geq \epsilon s_n^2/C(n,\alpha)^2 \}\Big| \mathcal{A} \Big.\Big) \to 0,
\end{align*}
as $n\to \infty$. Thus the Lindeberg condition is satisfied conditionally on $\mathcal{A}$ for almost every path of the random walk and thus by the central limit theorem and the fact that a.s.
%
%
$$d_n^{-2} \sigma^2 \sum_{\alpha \in \mathbb{Z}} \Big(\sum_{j=1}^m a_j \big(N_{\alpha}([nt_j]) - N_{\alpha}([nt_{j-1}]) \big)\Big)^2
\to \sum_{j=1}^m a_j^2 (t_j - t_{j-1}),$$
we can conclude that
$$\sum_{j=1}^m a_j (Y_n(t_j) - Y_n(t_{j-1}))\cvgdist N(0,{\textstyle\sum_{j=1}^m a_j^2 (t_j - t_{j-1})}).$$
Convergence of the finite dimensional distributions follows from the Cram\'er-Wold theorem.

Tightness then follows by carefully adapting the Bolthausen proof. Alernatively, we truncate in terms of monotone functions $\xi_x = f_{M^+}(\xi_x) + f_{M^-}(\xi_x) + f^M (\xi_x)$, where $f^M(y) = y$ for $|y|\leq M$ and $M$ otherwise, $f_{M^+}(y)=y-M$ for $y>M$ and 0 otherwise, and $f_{M^-}(y) = y+M$ for $y< -M$ and 0 otherwise. By the Newman-Wright maximal inequality (see \cite{Newman81}) it follows that the left and right parts of the scenery, corresponding to $f_{M^+}$ and $f_{M^-}$ converge to zero. Tightness of the truncated scenery follows from the maximal inequality in \cite[Theorem~3.1]{Mor82}.


\begin{thebibliography}{10}
\bibitem{Bolt89}
\textsc{Bolthausen, E.} (1989). A Central Limit Theorem for Two-Dimensional Random Walks in Random Sceneries.
\textit{Ann. Prob.} \textbf{17} 108--115.

\bibitem{Bass06}
\textsc{Bass, R.F., Chen, X.} and \textsc{Rosen, J.} (2006).
{Moderate deviations and laws of the iterated logarithm for the renormalized self-intersection local times of planar random walks},
\textit{Electron. J. Probab}. \textbf{11}:{37} {993--1030}.

\bibitem{Cerny07}
\textsc{\u{C}ern\'y J.} (2007).
Moments and distribution of the local time of a two-dimensional random walk.
\textit{Stoch. Proc. Appl.} \textbf{117} 262--270.

\bibitem{Flaj90}
\textsc{Flajolet, P.} and \textsc{Odlyzko, A.M. } (1990).
Singularity analysis of generating functions.
\textit{SIAM Journal on discrete mathematics}. \textbf{3}:2  216--240.

%

\bibitem{Kest79}
\textsc{Kesten, H.} and \textsc{Spitzer, F.} (1979). A limit theorem related
to a new class of self-similar processes.
\textit{Z. Wahrsch. verw. Gebiete}. \textbf{50} 5--25.

\bibitem{Knuth89}
\textsc{Knuth, D.E.} and \textsc{Wilf, H.S.}(1989).
A short proof of Darboux's lemma.
\textit{Applied Mathematics Letters}. \textbf{2}:2 139--140.

\bibitem{Korev02}
\textsc{Korevaar, J.} (2002).
A century of complex Tauberian theory.
\textit{Bull. Amer. Math. Soc.} \textbf{39}:{4} 475--532.


\bibitem{Law91}
\textsc{Lawler, G. F.} (1991). \textit{Intersections of Random Walks}.
Birkh\"auser, Boston MA.


\bibitem{Mor82}
\textsc{M{\'o}ricz F.A.} and \textsc{Serfling, R.J.} and \textsc{Stout, W.F.}(1982).
Moment and probability bounds with quasi-superadditive structure for the maximum partial sum.
\textit{Ann. Probab.} \textbf{10(4)} 1032--1040.

\bibitem{Newman81}
\textsc{Newman, C.M.} and \textsc{Wright, A.L.}(1981).
An invariance principle for certain dependent sequences.
\textit{Ann. Probab.} \textbf{9(4)} 671--675.

\bibitem{Plant02}
\textsc{Cabus, P.} and \textsc{Guillotin-Plantard, N.}(2002).
Functional limit theorems for {$U$}-statistics indexed by a random walk.
\textit{Stoch. Process. Appl.} \textbf{101} 143--160.

\bibitem{Plant08}
\textsc{Guillotin-Plantard, N.} and \textsc{Prieur, C.} (2010).
Central limit theorem for sampled sums of dependent random variables.
\textit{ESAIM }, DOI: 10.1051.

\bibitem{Spitzer76}
\textsc{Spitzer, F.}(1976).
\textit{Principles of Random Walk }.
Springer.

\bibitem{Symanzik69}
\textsc{Symanzik, K.}(1969). Euclidean quantum field theory. In \textit{Local Quantum Theory} (R. Jost, ed.). Academic Press, New York.


\bibitem{Var69app}
  \textsc{Varadhan, S.R.S.}(1969).
  Appendix to "Euclidean quantum field theory" by K. Symanzik. In
  \textit{Local Quantum Theory} (R. Jost ed.). Academic Press, New York.


\bibitem{Wiener32}
\textsc{Wiener, N.} (1932). Tauberian Theorems. \textit{Ann. of Math.} \textbf{33} 1--100.

\end{thebibliography}
\end{document}